 \newtheorem{theorem}{Theorem}[section]
 \newtheorem{lemma}[theorem]{Lemma}
 \theoremstyle{definition}
 \theoremstyle{remark}
\numberwithin{equation}{section}
\begin{document}


\baselineskip=17pt


\title[Additive bases of $C_3\oplus C_{3q}$]{Additive bases of $C_3\oplus C_{3q}$}

\author[Y.K. Qu]{Yongke Qu}
\address{Department of Mathematics\\ Luoyang Normal University\\
LuoYang 471934\\ P.R. China}
\email{yongke1239@163.com}

\author[Y.L. Li]{Yuanlin Li*}
\address{Department of Mathematics and Statistics\\  Brock University\\
St. Catharines, ON L2S 3A1, Canada}
\email{yli@brock.ca}

\thanks{*Corresponding author: Yuanlin Li, E-mail: yli@brocku.ca}

\date{}

\begin{abstract}
Let $G$ be a finite abelian group and $p$ be the smallest prime dividing $|G|$. Let $S$ be a sequence over $G$.
We say that $S$ is regular if for every proper subgroup $H \subsetneq G$, $S$ contains at most $|H|-1$ terms from
$H$. Let $\mathsf c_0(G)$ be the smallest integer $t$ such that every regular sequence $S$ over $G$ of length
$|S|\geq t$ forms an additive basis of $G$, i.e., $\sum(S)=G$. The invariant  $\mathsf c_0(G)$ was first studied
by Olson and Peng in 1980's, and since then it has been determined for all finite abelian groups except for the
groups with rank 2 and a few groups of rank 3 or 4 with order less than $10^8$. In this paper, we focus on the
remaining case concerning groups of rank 2. It was conjectured by the first author and Han (Int. J. Number Theory 13 (2017)
    2453-2459) that $\mathsf c_0(G)=pn+2p-3$ where
$G=C_p\oplus C_{pn}$ with $n\geq 3$. We confirm the conjecture for the case when $p=3$ and $n=q \,(\geq 5)$ is a
prime number.
\end{abstract}

\subjclass[2020]{Primary 11B75; Secondary 11P70}

\keywords{Finite abelian group, Additive basis, Regular sequence}

\maketitle

\section{Introduction and main results}
Through the paper, let $G$ be a finite abelian group, written additively, $p$ be the smallest prime dividing $|G|$
and $\mathsf r(G)$ denote the rank of $G$. Let $S$ be a sequence over $G$. We say that $S$ is an {\sl additive
basis} of $G$ if every element of $G$ can be expressed as the sum over a nonempty subsequence of $S$. For every
subgroup $H$ of $G$, let $S_H$ denote the  subsequence of $S$ consisting of all terms of $S$ contained in $H$. We
say that $S$ is a regular sequence over $G$ if $|S_H|\leq |H|-1$ holds for every subgroup $H \subsetneq G$. Let
$\mathsf c_0(G)$ denote the smallest integer $t$ such that every regular sequence over $G$ of length at least $t$
is an additive basis of $G$. The problem of determining $\mathsf c_0(G)$ was first proposed by Olson and it was
conjectured that $\mathsf c_0(C_p\oplus C_p)=2p-1$. In 1987, Peng proved this conjecture and further determined
$\mathsf c_0(G)$ for all the finite elementary abelian $p$-groups (\cite{Peng1,Peng2}). Recently, the problem
related to the additive basis of a finite abelian group has been investigated by several authors (\cite{GHQQZ, GQZ,
QH1, QH2}). In particular, $\mathsf c_0(G)$ has been determined for any of the following finite abelian groups:
\begin{enumerate}
\item $G$ is cyclic;

\item $|G|$ is even;

\item $\mathsf r(G)\geq 4$ and $G\neq C_3^3\oplus C_{3n}$ where $n>3$ is odd and is not a power of $3$ with $|G|<
    3.72\times 10^7$ ;

\item $\mathsf r(G)=3$ and either $p\geq 11$ or $3\leq p \leq 7$ with $|G| \geq 3.72\times 10^7$;

\item $\mathsf r(G)\geq 2$ and $G$ is a $p$-group.

\end{enumerate}

 In this paper, we focus our investigation on the remaining case when $G$ is of rank $2$. It was conjectured in
 \cite{QH2} that $\mathsf c_0(G)=pn+2p-3$ where $G=C_p\oplus C_{pn}$ with $n\geq 3$. We remark that the existing
 methods used to compute this invariant for groups of rank greater than $2$ cannot be applied directly to calculate
 $\mathsf c_0(G)$ when $G$ is a group of rank 2. We adopt a new method (i.e., use group algebras as a tool) and we
 are able to confirm the above conjecture for the case when  $p=3$ and $n=q \,(\geq 5)$ is a prime.

\begin{theorem} \label{mainthm} Let $G=C_3\oplus C_{3q}$ be a finite abelian group with a prime $q\geq 5$. Then
$\mathsf c_0(G)=3q+3$.
\end{theorem}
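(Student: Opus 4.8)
The plan is to prove the two bounds $\mathsf c_0(G)\ge 3q+3$ and $\mathsf c_0(G)\le 3q+3$ separately. For the reduction, observe that if $S$ is regular with $|S|>3q+3$, then deleting terms gives a regular subsequence of length exactly $3q+3$, and if a subsequence is a basis so is $S$; hence for the upper bound it suffices to treat $|S|=3q+3$. For the lower bound I would exhibit an explicit regular sequence $S_0$ of length $3q+2$ with $\sum(S_0)\neq G$. Writing $G=\langle e_1\rangle\oplus\langle e_2\rangle$ with $\ord(e_1)=3$, $\ord(e_2)=3q$, and letting $M=\langle e_2\rangle$ be a cyclic maximal subgroup of order $3q$, I would place close to $3q-1$ terms inside $M$ together with a few terms in the two nontrivial cosets of $M$, tuned so that the subset sums avoid one prescribed nonzero value. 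The guiding principle is that $\mathsf D(G)=3q+2$, so a sequence of this length must contain a zero-sum subsequence; the construction therefore cannot avoid $0$ and must instead be arranged to avoid a single nonzero element, which dictates the coset distribution. Checking regularity and the missing element is a direct finite verification.

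For the upper bound, assume $|S|=3q+3$ is regular. The two relevant quotients are $G/Q\cong C_3\oplus C_3$, where $Q=3G\cong C_q$ is the unique subgroup of order $q$, and $G/P\cong C_q$, where $P=qG\cong C_3\oplus C_3$ is the Sylow $3$-subgroup. I would first dispose of the $C_q$-direction, which is easy: since regularity gives $|S_P|\le |P|-1=8$, deleting the at most $8$ terms lying in $P$ leaves a subsequence $S^\ast$ of length at least $3q-5\ge q$ whose image in $G/P\cong C_q$ is zero-free; because $G/P$ is cyclic, the already-known value $\mathsf c_0(C_q)=q$ shows this image is a basis, so the composite map $\sum(S)\to C_q$ is onto. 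Thus every fiber of $G\to C_q$ (every coset of $P$) already contains at least one subset sum of $S$.

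The crux is the remaining fiber-filling: for each $c\in C_q$ one must show that the subset sums landing in the coset $\pi^{-1}(c)$, which is a torsor under $P\cong C_3\oplus C_3$, exhaust all nine elements and not merely one. Here I would bring in the group algebra, studying $F_S=\prod_{s\in S}(1+X^{s})\in\mathbb F_3[G]$, whose coefficient at $X^{g}$ is nonzero only if $g\in\sum(S)$. Using $\mathbb F_3[G]=\mathbb F_3[P]\otimes\mathbb F_3[Q]$ and the semisimple character decomposition of $\mathbb F_3[Q]$, each component of $F_S$ has the shape $\prod_{s\in S}\bigl(1+\psi(s_Q)X^{s_P}\bigr)$ in $\mathbb F_{3^{d}}[P]$, where $\psi$ ranges over the characters of $Q$ and $d=\ord_q(3)$. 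Since $|G|$ is odd, no factor $1+\psi(s_Q)X^{s_P}$ reduces to $0$ modulo the augmentation ideal of $\mathbb F_{3^{d}}[P]$ (which is nilpotent of index $5$), so every component is a unit; the real work is to translate the hypothesis $g\notin\sum(S)$ — which forces the $X^{g}$-coefficient to vanish in every component at once — into a concentration of $S$ in one of the four maximal subgroups of order $3q$, contradicting $|S_M|\le 3q-1$. I expect this translation to be the main obstacle, precisely because the $C_3$- and $C_q$-parts of the terms are coupled: neither quotient is regular, so the problem does not decouple and one cannot simply invoke the elementary-abelian or cyclic cases. Concretely, I anticipate a case analysis according to how the terms of $S$ distribute among the cosets of the maximal subgroups and among the order-$3$ subgroups of $P$, in each case exploiting the nilpotency of the augmentation ideal of $\mathbb F_3[P]$ to pin down $[X^{g}]F_S$ and rule out its vanishing whenever regularity holds. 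Matching the threshold exactly at $3q+3$, so that the same argument just fails for the length-$(3q+2)$ example from the lower bound, will require tracking these counts tightly rather than with room to spare.
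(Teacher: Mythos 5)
Your framing is fine as far as it goes: the reduction to length exactly $3q+3$, the lower-bound strategy (the paper's explicit witness is $S={0\choose1}^{3q-2}{1\choose-1}^{4}$, regular of length $3q+2$ and missing ${2\choose 3q-3}$, consistent with your sketch though you leave the verification undone), and the easy observation that deleting the at most $8$ terms in $P\cong C_3\oplus C_3$ makes the projection of $\sum(S)$ onto $G/P\cong C_q$ surjective. But the proposal stops exactly where the theorem begins: the fiber-filling step, which you yourself flag as ``the main obstacle,'' is the entire content of the proof, and the device you choose for it cannot see the threshold. You fix all unit constants equal to $1$ and study $F_S=\prod_{s\in S}(1+X^{s})$. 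Your observation that every character-component of $F_S$ is a unit in the local ring $\mathbb{F}_{3^d}[P]$ is correct, but it is true for \emph{every} sequence over $G$, of any length, regular or not — in particular for the extremal length-$(3q+2)$ sequence above, where the $X^{g}$-coefficient at the missing element $g$ does vanish in every component while all components are units. So unit-ness plus coefficient-vanishing is perfectly consistent and yields no contradiction; nothing in your setup distinguishes $3q+3$ from $3q+2$, and you name no quantitative input that would.

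The paper's missing idea is precisely that quantitative input: it works with products $\prod_i(X^{g_i}-a_i)$ over \emph{all} choices of constants $a_i\in\mathbf{F}^{\times}$, which makes the invariant $\mathsf d(G,\mathbf{F})$ available, and for $G=C_3\oplus C_{3q}$ one has the exact value $\mathsf d(G,\mathbf{F})=3q+1$ (Lemma~\ref{d(G,F)}, due to Gao--Li and Smertnig). Hence a maximal subsequence $S_1$ with nonvanishing product has length at most $3q+1$, leaving at least two terms of order $3q$ outside $S_1$; Lemma~\ref{coset} then converts minimal vanishing products into full cosets of order-$3$ or order-$3q$ subgroups contained in $\sum_0$ of suitable subsequences, and these cosets are upgraded to $\sum(S)=G$ by Kneser's theorem together with the stabilizer lemma (conditions (i) and (ii) of Lemma~\ref{basic}), via a case analysis over the four subgroups of order $3q$ that also invokes the inverse results for cyclic groups (Lemma~\ref{cyclic}) and for $C_3\oplus C_3$ (Lemma~\ref{rank2}). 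None of this machinery — variable constants, the value $3q+1$, the coset lemma, the Kneser counting — appears in your proposal, and the paper in fact never needs your $G/P$-surjectivity step at all. As written, the proposal is a plausible research plan with a genuine gap at its central step, not a proof.
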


\section{Notations and Preliminaries}
Suppose that $G_0\subseteq G$ is a subset of $G$ and $\mathcal{F}(G_0)$ is the multiplicatively written, free
abelian monoid with basis $G_0$. The elements of $\mathcal{F}(G_0)$ are called {\it sequences} over
$G_0$. A sequence $S$ over $G_0$ will be written in the form
$$
S = g_1 \cdot \ldots \cdot g_{\ell}=\Pi_{i\in [1,\ell]}g_i, $$
where $g_i \in G_0$ for all $1\leq i\leq {\ell}$. We say $T=\Pi_{i\in I}g_i$ a subsequence of $S$ and denote by
$T|S$, where $I\subseteq [1, \ell]$. For a subsequence $T|S$, let $I_T=\{i\in [1, \ell]~|~g_i|T\}$. We set $T=1$ if
$I_T=\emptyset$. We call
\begin{eqnarray*}
|S|&=&\ell \in \mathbb N_0 \quad  \text{the {\it length} of } \ S,\\
\sigma (S)&=&\sum_{i=1}^{\ell}g_i\in G\quad  \text{the {\it sum} of} \ S.
\end{eqnarray*}
Define
  $$
  \sum(S)=\{\sigma(T): \ 1 \neq  T\mid S\},
  $$
\noindent and $$
  \sum\nolimits_0(S)=\sum(S)\cup\{0\}.
  $$
\noindent We call a sequence $S$ a zero-sum sequence if $\sigma(S)=0$, and a zero-sumfree sequence if $0\notin
\sum(S)$.

Let $\mathsf D(G)$ denote the Davenport constant of $G$, which is defined as the smallest integer $t$ such that
every sequence $S$ over $G$ of length $|S|\geq t$ contains a nonempty zero-sum subsequence. Let $\mathsf d(G)$
denote the maximal length of a zero-sumfree sequence over $G$. Then $\mathsf d(G)=\mathsf D(G)-1$. The exact value
of $\mathsf D(G)$ has been determined only for a few classes of groups, such as finite abelian $p$-groups, abelian
groups rank not exceeding $2$, and certain very special abelian groups of rank $3$ (\cite{GL}). Notice that
$\mathsf D(C_{n_1}\oplus C_{n_2})=n_1+n_2-1$, where $1\leq n_1|n_2$ (\cite[Theorem 5.8.5]{GeK}).

For each subset $A$ of $G$, denote by $\langle A \rangle$ the subgroup generated by $A$.  Let ${\rm st}(A)=\{g\in
G: g+A=A\}$. Then ${\rm st}(A)$ is the maximal subgroup $H$ of $G$ such that $H+A=A$. The following is the well
known Kneser's theorem and a proof of it can be found in \cite{Na}.

\begin{lemma}(Kneser) \cite[Theorem 4.4]{Na}\label{Kneser} Let $A_1, \ldots, A_r$ be nonempty finite subsets of an
abelian group $G$, and let $H={\rm st}(A_1+\cdots +A_r)$. Then,
$$
|A_1+\cdots +A_r|\geq |A_1+H|+\cdots +|A_r+H|-(r-1)|H|.
$$
\end{lemma}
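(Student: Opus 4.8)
The plan is to prove the general $r$-set inequality by first reducing it, via passage to a quotient group, to the case $r=2$, and then to establish the two-set inequality by an induction driven by the Dyson $e$-transform. For the reduction, I would pass to $\bar G = G/H$, where $H = \mathrm{st}(A_1 + \cdots + A_r)$. Writing $\bar A_i = (A_i + H)/H$, the sum $\bar A_1 + \cdots + \bar A_r = (A_1 + \cdots + A_r)/H$ has trivial stabilizer in $\bar G$ by maximality of $H$, and one has $|A_i + H| = |H|\,|\bar A_i|$ together with $|A_1 + \cdots + A_r| = |H|\,|\bar A_1 + \cdots + \bar A_r|$. Hence, after dividing by $|H|$, the stated inequality is equivalent to the \emph{reduced form}: if $\mathrm{st}(A_1 + \cdots + A_r) = \{0\}$, then $|A_1 + \cdots + A_r| \ge |A_1| + \cdots + |A_r| - (r-1)$.

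I would prove this reduced form by induction on $r$, the base case $r=2$ being the two-set inequality with trivial stabilizer. For the inductive step the key observation is that $\mathrm{st}(A_2 + \cdots + A_r) \subseteq \mathrm{st}(A_1 + \cdots + A_r) = \{0\}$, since any period of a summand is a period of the whole sum; thus the partial sum $B = A_2 + \cdots + A_r$ automatically has trivial stabilizer, the induction hypothesis yields $|B| \ge |A_2| + \cdots + |A_r| - (r-2)$, and the two-set bound $|A_1 + B| \ge |A_1| + |B| - 1$ closes the induction. This reduction is clean, so all the difficulty is concentrated in the two-set case.

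For the two-set case the main tool is the $e$-transform: for $e \in G$ set $A_e = A \cup (B + e)$ and $B_e = B \cap (A - e)$. Two elementary identities drive everything: $A_e + B_e \subseteq A + B$, so the sumset never grows, and $|A_e| + |B_e| = |A| + |B|$, since the map $x \mapsto x+e$ is a bijection from $(A-e)\cap B$ onto $A\cap(B+e)$. Moreover $B_e \ne \emptyset$ exactly when $e \in A - B$, and $B_e \subsetneq B$ exactly when $B + e \not\subseteq A$. I would then induct on $|B|$, the base $|B|=1$ being immediate with equality. For $|B| \ge 2$ there is a dichotomy: either no $e$ gives $\emptyset \ne B_e \subsetneq B$, in which case $B + e \subseteq A$ for every $e \in A - B$, forcing $(B-B)+A = A$, hence $B - B \subseteq \mathrm{st}(A)$, so that $B$ lies in a single coset of $\mathrm{st}(A)$ and $A+B$ is a translate of $A$, and the inequality holds with equality directly; or such an $e$ exists, and one passes to the strictly smaller pair $(A_e, B_e)$ to invoke the induction hypothesis.

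The hard part is the second horn of the dichotomy: transferring the inductive bound for $(A_e, B_e)$ back to $(A, B)$. The obstruction is that $H' = \mathrm{st}(A_e + B_e)$ need not equal $H = \mathrm{st}(A+B)$ — a subset of $A+B$ may have a strictly larger stabilizer — so the bound obtained for $A_e + B_e$ does not automatically dominate the target bound for $A+B$. Controlling this stabilizer jump through the transform is the technical heart of Kneser's theorem. I would resolve it by the classical device of analysing a counterexample with $|A|+|B|$ minimal: the stuck case above is already handled, so an admissible $e$ with $\emptyset \ne B_e \subsetneq B$ must exist, and one shows that the resulting configuration, via the induction hypothesis and the inclusion $A_e+B_e \subseteq A+B$, forces enough rigidity on $A$ and $B$ (in particular on how $H'$ can sit relative to $H$) to contradict the minimality. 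Once the two-set case is secured in this way, the reduction of the first two paragraphs delivers the general inequality.
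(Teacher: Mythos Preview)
The paper does not give its own proof of this lemma: it is stated as the classical Kneser theorem and the reader is referred to Nathanson's book \cite[Theorem 4.4]{Na} for a proof. So there is nothing in the paper to compare your argument against.

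That said, your plan is essentially the standard textbook proof (in particular the one in \cite{Na}): reduce to the trivial-stabilizer statement by passing to $G/H$, reduce the $r$-set case to $r=2$ via the inclusion $\mathrm{st}(A_2+\cdots+A_r)\subseteq\mathrm{st}(A_1+\cdots+A_r)$, and then handle the two-set case with the Dyson $e$-transform and an extremal/minimal-counterexample argument. Your identification of the identities $A_e+B_e\subseteq A+B$ and $|A_e|+|B_e|=|A|+|B|$, the dichotomy on whether an admissible $e$ exists, and the analysis of the ``stuck'' case are all correct. The only soft spot is your final paragraph: you correctly flag that the stabilizer may jump under the transform, $\mathrm{st}(A_e+B_e)\supsetneq\mathrm{st}(A+B)$, and that controlling this is the technical heart of the proof, but your description of how the minimal-counterexample argument actually closes remains a sketch rather than an argument. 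Since the paper treats Kneser's theorem as a black box anyway, this level of detail is more than adequate for the purposes of the present paper; if you want a fully rigorous write-up, the missing piece is exactly what Nathanson's Chapter~4 supplies.
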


We note that if $H=G$ then $A_1+\cdots +A_r=G$. So when using the above Kneser's Theorem to prove $A_1+\cdots
+A_r=G$, we need only consider the case when $H\neq G$.

The following three lemmas provide some results concerning the additive basis and its inverse problem, which will be
needed in sequel.

\begin{lemma}\cite[Theorem 1.1]{QH1}\label{cyclic}
Let $G$ be a cyclic group with order $n$, and let $S$ be a regular sequence of length $|S|=n-1$ over $G$. If
$\sum(S)\neq G$, then $S=g^{n-1}$ where $g$ is a generator of $G$.
\end{lemma}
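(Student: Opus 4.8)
The plan is to split the statement into two independent claims: (i) for every regular sequence $S$ over a cyclic group $G$ of order $n$ with $|S|=n-1$ one has $\sum\nolimits_0(S)=G$, and (ii) the only zero-sum free sequences of length $n-1$ over $G$ are the sequences $g^{n-1}$ with $g$ a generator. Granting these, if $\sum(S)\neq G$ then by (i) we must have $0\notin\sum(S)$, i.e.\ $S$ is zero-sum free, and then (ii) yields $S=g^{n-1}$, as desired. The clean engine for (i) is Kneser's theorem, while I expect (ii) to be the real obstacle.

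For (i), identify $G\cong\mathbb{Z}_n$, write $S=g_1\cdots g_{n-1}$, and note that regularity forbids the term $0$, so every $g_i\neq0$. Setting $A_i=\{0,g_i\}$ we have $\sum\nolimits_0(S)=A_1+\cdots+A_{n-1}$, so Kneser's theorem (Lemma \ref{Kneser}) applies with $r=n-1$ and $H={\rm st}(\sum\nolimits_0(S))$. If $H=G$ then $\sum\nolimits_0(S)=G$ and there is nothing more to prove; otherwise $H$ is a proper subgroup. Since $|A_i+H|$ equals $|H|$ if $g_i\in H$ and $2|H|$ if $g_i\notin H$, writing $k$ for the number of terms lying outside $H$ gives
$$|\sum\nolimits_0(S)|\geq (n-1+k)|H|-(n-2)|H|=(k+1)|H|.$$
Here regularity enters decisively: the proper subgroup $H$ contains at most $|H|-1$ terms of $S$, so $n-1-k\leq|H|-1$ and hence $k\geq n-|H|$. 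Therefore $|\sum\nolimits_0(S)|\geq (n-|H|+1)|H|$, and since $(n-|H|+1)|H|\geq n$—equivalently $n(|H|-1)\geq|H|(|H|-1)$, which holds because $n\geq|H|$—we get $|\sum\nolimits_0(S)|\geq n$. As $|\sum\nolimits_0(S)|\leq n$ this forces $\sum\nolimits_0(S)=G$, proving (i).

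It remains to establish (ii), which I regard as the main difficulty. A quick partial-sum argument already shows that a zero-sum free $S$ of length $n-1$ has $\sum(S)=G\setminus\{0\}$: for any ordering the $n$ partial sums $0,s_1,\dots,s_{n-1}$ are pairwise distinct, since a coincidence would exhibit a nonempty zero-sum subsequence, so they exhaust $G$. Pinning down the multiset of terms, however, requires the rigidity of the extremal case. I would deduce it from the classical structure theorem for zero-sum free sequences of maximal length $\mathsf d(G)=n-1$ over a cyclic group, which states exactly that such a sequence equals $g^{n-1}$ for a generator $g$ (see, e.g., \cite{GeK}); a self-contained proof proceeds by induction on $n$, passing to the quotient by a subgroup of prime index $q\mid n$, grouping the terms into blocks that are zero-sum modulo that subgroup, and applying the inductive hypothesis to both $\mathbb{Z}_q$ and the subgroup of order $n/q$. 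Finally the element $g$ is automatically a generator: if $\ord(g)=d<n$ then $g^{d}$ would be a nonempty zero-sum subsequence of $S$, contradicting zero-sum freeness. Combining (i) and (ii) completes the proof.
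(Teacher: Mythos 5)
Your proposal is correct, but note first that the paper contains no proof of this lemma at all: it is imported verbatim from \cite{QH1} (Theorem 1.1), so the comparison is with that external source rather than with anything in the present text. Your decomposition into (i) and (ii) is sound. Step (i) is proved completely and correctly, and it is a nice consonance that your engine is exactly Kneser's theorem (Lemma \ref{Kneser}), the same tool the paper deploys inside Lemma \ref{basic}(ii); the accounting is right, with regularity giving $k\geq n-|H|$ and then $(k+1)|H|\geq(n-|H|+1)|H|\geq n$, including the degenerate case $|H|=1$, and the reduction from $\sum\nolimits_0(S)=G$ together with $\sum(S)\neq G$ to zero-sum freeness is airtight. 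Step (ii) rests on the classical inverse theorem for zero-sum free sequences of length $\mathsf d(C_n)=n-1$, which is indeed classical and legitimately cited to \cite{GeK}; your supplementary induction sketch, however, is the one weak spot: extracting blocks whose sums lie in the index-$q$ subgroup only controls the block-sum sequence over that subgroup and the leftover modulo it, and the passage from that structure to the conclusion that all individual terms of $S$ coincide is precisely the part left unproved. A cheaper way to make (ii) self-contained, avoiding the induction entirely: since $S$ is zero-sum free of length $n-1$, the $n$ partial sums of any fixed ordering are pairwise distinct (as you observe) and hence exhaust $G$; comparing the orderings $g_1,g_2,g_3,\ldots$ and $g_2,g_1,g_3,\ldots$, the two partial-sum sets share the $n-1$ distinct elements $0,\,g_1+g_2,\,s_3,\ldots,s_{n-1}$, so $g_1$ and $g_2$ must both be the unique remaining element of $G$, forcing $g_1=g_2$, and by symmetry all terms coincide; your order argument then shows $g$ is a generator. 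With that substitution (or simply resting on the citation), the proof is complete.
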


\begin{lemma}\cite[Lemma 2.3(1)]{GHQQZ}\label{st=0}
Let $G$ be a finite abelian group, and let $p$ be the smallest prime dividing $|G|$. Let $S$ be a regular sequence over $G$ of length $|S|\geq \mbox{max}\{\frac{|G|}{p}+p-2, \mathsf D(G)\}$. If $\sum(S)\neq G$, then $\mbox{st}(\sum(S))=\{0\}$.
\end{lemma}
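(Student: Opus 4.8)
The plan is to argue by contradiction. Suppose that $H:=\mathrm{st}(\sum(S))\neq\{0\}$. Since $S$ is nonempty we have $\sum(S)\neq\emptyset$, and as $\sum(S)\neq G$ the subgroup $H$ must be proper: indeed $H=G$ would force $\sum(S)$ to be a nonempty union of $G$-cosets, hence $\sum(S)=G$. Because $H$ is proper and nontrivial, its order $h:=|H|$ satisfies $h\geq p$, and the quotient $K:=G/H$ is nontrivial, so $m:=|K|=|G|/h$ also satisfies $m\geq p$ (every prime dividing $m$ divides $|G|$). Let $\pi\colon G\to K$ be the canonical epimorphism and put $\bar S:=\pi(S)$, a sequence over $K$ of length $|\bar S|=|S|$.

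First I would transfer the problem to $K$. Since $\pi$ is a homomorphism, $\sum(\bar S)=\pi(\sum(S))$, and because $\sum(S)$ is $H$-periodic we get $\sum(S)=\pi^{-1}(\pi(\sum(S)))=\pi^{-1}(\sum(\bar S))$. Consequently $\sum(S)=G$ if and only if $\sum(\bar S)=K$, so it suffices to prove $\sum(\bar S)=K$ in order to contradict $\sum(S)\neq G$. The hypothesis $|S|\geq\mathsf D(G)$ guarantees a nonempty zero-sum subsequence of $S$, whence $0\in\sum(S)$ and $H\subseteq\sum(S)$; thus it is enough to show $\sum_0(\bar S)=K$, where $\sum_0(\bar S)=\sum(\bar S)\cup\{0\}$.

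The engine of the argument is Kneser's theorem (Lemma~\ref{Kneser}) applied to the Minkowski-sum description
\[
\sum\nolimits_0(\bar S)=\{0,\bar g_1\}+\{0,\bar g_2\}+\cdots+\{0,\bar g_{|S|}\},\qquad \bar g_i=\pi(g_i).
\]
Writing $H':=\mathrm{st}(\sum_0(\bar S))$ and letting $n_0$ be the number of indices $i$ with $\bar g_i\notin H'$, each summand contributes $|\{0,\bar g_i\}+H'|=2|H'|$ when $\bar g_i\notin H'$ and $|H'|$ otherwise, so Kneser's bound yields $|\sum_0(\bar S)|\geq (n_0+1)|H'|$. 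Next I would show $H'=\{0\}$: otherwise $H'$ lifts to a subgroup $\widehat H$ with $H\subsetneq\widehat H\subseteq G$, and the $H'$-periodicity of $\sum_0(\bar S)$ would make $\sum(S)=\pi^{-1}(\sum_0(\bar S))$ invariant under $\widehat H$, contradicting $\mathrm{st}(\sum(S))=H$. With $H'=\{0\}$ the bound becomes $|\sum_0(\bar S)|\geq n_0+1$, where $n_0$ now counts the nonzero terms of $\bar S$, i.e. the terms of $S$ lying outside $H$.

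The last step is the length bookkeeping, which I expect to be the main obstacle. By regularity of $S$ we have $|S_H|\leq h-1$, so $n_0=|S|-|S_H|\geq |S|-h+1$ and hence $|\sum_0(\bar S)|\geq |S|-h+2$. It then remains to check $|S|-h+2\geq m=|G|/h$, which will follow from $|S|\geq |G|/p+p-2$ once one verifies the purely numerical inequality $|G|/p+p-2\geq |G|/h+h-2$, equivalently $|G|\,(h-p)/(ph)\geq h-p$. This is exactly where the hypothesis that $p$ is the smallest prime dividing $|G|$ is used: since $h\geq p$ and $m=|G|/h\geq p$ we have $|G|\geq ph$, so the inequality holds (it is an equality, reducing to the given bound, when $h=p$, and is strict when $h>p$). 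Combining these estimates gives $|\sum_0(\bar S)|\geq |G|/h=|K|$, forcing $\sum_0(\bar S)=K$ and therefore $\sum(S)=G$, the desired contradiction.
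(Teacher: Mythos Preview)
The paper does not give its own proof of this lemma; it is quoted verbatim from \cite[Lemma~2.3(1)]{GHQQZ}, so there is nothing in the present paper to compare your argument against.

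That said, your argument is correct and self-contained. The logic is clean: assuming $H=\mathrm{st}(\sum(S))$ is nontrivial and proper, you pass to $K=G/H$, use $|S|\geq \mathsf D(G)$ to ensure $0\in\sum(S)$ (so that $\sum(\bar S)=\sum_0(\bar S)$ and the stabilizer computations line up), apply Kneser to $\sum_0(\bar S)=\{0,\bar g_1\}+\cdots+\{0,\bar g_{|S|}\}$, and rule out a nontrivial stabilizer $H'$ in $K$ by the maximality of $H$. The resulting bound $|\sum_0(\bar S)|\geq n_0+1\geq |S|-|S_H|+1$ combined with regularity ($|S_H|\leq |H|-1$) reduces the claim to the elementary inequality $|G|/p+p-2\geq |G|/h+h-2$ for $p\leq h\leq |G|/p$, which you verify. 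Every hypothesis is used exactly where needed: regularity to bound $|S_H|$, the $\mathsf D(G)$ threshold to get $0\in\sum(S)$, and the minimality of $p$ to control the convex function $x\mapsto |G|/x+x$ on the relevant range. This is essentially the standard proof of the cited result.
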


\begin{lemma}\cite[Theorem 2]{Peng1}\cite[Lemma 3.12]{GPZ2013}\label{rank2}
Let $G=C_p\oplus C_p$ and $S$ be a regular sequence over $G$. Then $\mathsf c_0(G)=2p-1$. Moreover, if $|S|=2p-2$,
then $|\sum_0(S)|\geq |G|-1$.
\end{lemma}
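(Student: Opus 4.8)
The plan is to split the statement into the evaluation $\mathsf c_0(G)=2p-1$ and the inverse estimate $|\sum_0(S)|\ge|G|-1$, and to deduce the upper bound $\mathsf c_0(G)\le 2p-1$ from the latter. For the lower bound $\mathsf c_0(G)\ge 2p-1$, fix a basis $(e_1,e_2)$ of $G$ and take $S=e_1^{p-1}e_2^{p-1}$; this is regular of length $2p-2$, and since $\sigma(T)=ae_1+be_2$ with $0\le a,b\le p-1$ vanishes only for the empty $T$, one gets $\sum(S)=G\setminus\{0\}\ne G$, so $S$ is not a basis. For the upper bound I would show that the inverse estimate already forces it: given a regular $S$ with $|S|=2p-1$, delete one term $g$ to obtain a regular sequence $S'=Sg^{-1}$ of length $2p-2$; granting $|\sum_0(S')|\ge|G|-1$, the set $\sum_0(S')$ omits at most one point, and since $\sum_0(S)=\sum_0(S')\cup\bigl(g+\sum_0(S')\bigr)$ with $g\ne 0$ (regularity forbids the term $0$), the two translates cover $G$, whence $\sum_0(S)=G$; as $|S|=2p-1=\mathsf D(G)$ there is a nonempty zero-sum subsequence, so $0\in\sum(S)$ and therefore $\sum(S)=G$.

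It remains to prove the inverse estimate, and here I would set up a Minkowski-sum picture. Let $H_0,\dots,H_p$ be the $p+1$ subgroups of order $p$, put $k_j=|S_{H_j}|\le p-1$ (so $\sum_j k_j=2p-2$, since every term is nonzero), and let $A_j=\sum_0(S_{H_j})\subseteq H_j$, so that $A:=\sum_0(S)=A_0+\cdots+A_p$. The Cauchy--Davenport theorem in $H_j\cong C_p$ gives $|A_j|\ge k_j+1$. Applying Kneser's theorem (Lemma~\ref{Kneser}) to this sum, set $H=\mathrm{st}(A)$: if $H=G$ then $A=G$ and we are done, while a short computation shows that $|H|=p$ drives the Kneser bound up to $p^2$, i.e.\ $A=G$, contradicting $H\ne G$. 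Thus the only surviving case is the aperiodic one, $\mathrm{st}(A)=\{0\}$.

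The aperiodic case is the heart of the matter, and I expect it to be the main obstacle, since Kneser alone yields only the useless bound $|A|\ge 2p-1$. Choosing the direction $H_0$ with $k_0=\max_j k_j$ and projecting $\pi\colon G\to G/H_0\cong C_p$, Cauchy--Davenport gives $|\pi(A)|\ge\min\{p,\,2p-1-k_0\}=p$, so $A$ meets every coset of $H_0$. Writing $B=A_1+\cdots+A_p$ and $m_x=|\{b\in B:\pi(b)=x\}|\ge 1$ for each coset $x$, the intersection $A\cap(H_0+x)$ is exactly the sumset inside the affine line $H_0+x$ of $A_0$ (size $k_0+1$) with the fiber of $B$ over $x$, so Cauchy--Davenport gives $|A\cap(H_0+x)|\ge\min\{p,\,k_0+m_x\}$ and hence
$$
|A|\ \ge\ \sum_{x\in G/H_0}\min\{p,\,k_0+m_x\}\ =\ p^2-\sum_{x}\bigl(p-k_0-m_x\bigr)^+ .
$$
The goal is therefore to show that the total deficiency $\sum_x(p-k_0-m_x)^+$ is at most $1$; a direct check shows this is sharp, the extremal configuration being the sequence with one term from each of $2p-2$ distinct order-$p$ subgroups. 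The delicate point is controlling the fiber sizes $m_x$ of $B$: one again analyses $\mathrm{st}(B)$ via Kneser (when $B$ is periodic the fibers are essentially uniform and the deficiency vanishes), the genuinely hard subcase being when the mass of $S$ is spread over many subgroups so that $k_0$ is small. There I would close the estimate by an induction on $p$ (or by invoking Kemperman's structure theorem for critical sumsets) to pin down the admissible fiber distributions. This final optimisation over the partition $(k_0,\dots,k_p)$ is where the real work lies.
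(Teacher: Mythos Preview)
The paper does not give its own proof of this lemma; it is quoted as a preliminary result from \cite{Peng1} and \cite{GPZ2013}, so there is no in-paper argument to compare your proposal against. That said, your attempt can be assessed on its own terms.

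Your lower-bound example $e_1^{p-1}e_2^{p-1}$ and your reduction of the upper bound $\mathsf c_0(G)\le 2p-1$ to the inverse estimate are both correct and clean. The Kneser treatment of the periodic case ($|\mathrm{st}(A)|=p$) also checks out: with $H=H_{i_0}$ one gets $|A|\ge p(2p-1-k_{i_0})\ge p^2$ since $k_{i_0}\le p-1$.

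The genuine gap is the aperiodic case of the inverse estimate, which you explicitly flag as ``where the real work lies'' but do not carry out. Two concrete issues. First, your claimed extremal configuration---``one term from each of $2p-2$ distinct order-$p$ subgroups''---is impossible for $p\ge 5$, since $C_p\oplus C_p$ has only $p+1$ such subgroups; so your picture of the tight case is off. Second, the proposed closures (``induction on $p$'' or Kemperman's structure theorem) are not turned into an argument, and it is unclear how an induction on $p$ would even be set up, as the problems for different primes are not nested in any obvious way. Your fiber-counting inequality $|A|\ge\sum_x\min\{p,\,k_0+m_x\}$ is a reasonable starting point, but establishing the deficiency bound $\sum_x(p-k_0-m_x)^+\le 1$ requires real structural control of $B=A_1+\cdots+A_p$ that you have not supplied. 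As written, the core of the lemma---the inverse estimate in the aperiodic case---remains unproved.
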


For a field $\mathbf{F}$, let $\mathbf{F}G$ denote the group algebra of $G$ over $\mathbf{F}$ and $d(G, \mathbf{F})$ be the largest integer $\ell\in \mathbb{N}$ having the following
property:

There exists some sequence $S=g_1\cdot \ldots \cdot g_{\ell}$ over $G$ of length $\ell$ such that
$$(X^{g_1}-a_1)\cdot \ldots \cdot (X^{g_{\ell}}-a_{\ell})\neq 0 \in \mathbf{F}G \mbox{ for all }
a_1,\ldots,a_{\ell}\in \mathbf{F}^{\times}.$$

\begin{lemma}\cite[Theorem 3.3]{GL}\cite[Theorem 1.1]{S}\label{d(G,F)}
Let $G$ be a finite abelian group and $\mathbf{F}$ be a splitting field of $G$. Then,
\begin{enumerate}

\item if $G=C_2\oplus C_{2n}$, then $d(G, \mathbf{F})=d(G)=2n$;

\item if $G=C_3\oplus C_{3n}$, then $d(G, \mathbf{F})=d(G)=3n+1$.
\end{enumerate}
\end{lemma}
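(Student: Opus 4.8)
The plan is to prove the two inequalities $d(G,\mathbf{F})\ge d(G)$ and $d(G,\mathbf{F})\le d(G)$. Recall that $d(G)$ is the maximal length $\mathsf{D}(G)-1$ of a zero-sumfree sequence, so the formula $\mathsf{D}(C_{n_1}\oplus C_{n_2})=n_1+n_2-1$ gives $d(G)=n_1+n_2-2$; this equals $2n$ for $G=C_2\oplus C_{2n}$ and $3n+1$ for $G=C_3\oplus C_{3n}$, whence the two displayed values follow once the equality $d(G,\mathbf{F})=d(G)$ is established. I will first dispose of the lower bound, which is purely formal and independent of $\mathbf{F}$. Let $\{e_1,e_2\}$ be a basis of $G$ with $\ord(e_i)=n_i$, and set $S=e_1^{n_1-1}e_2^{n_2-1}$, a zero-sumfree sequence of length $d(G)$. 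Expanding the product gives
\[
\prod_{i=1}^{\ell}(X^{g_i}-a_i)=\sum_{T\mid S}(-1)^{\ell-|T|}\Big(\prod_{i\notin I_T}a_i\Big)X^{\sigma(T)},
\]
so the coefficient of $X^{0}$ is $\sum_{T\mid S,\ \sigma(T)=0}(-1)^{\ell-|T|}\prod_{i\notin I_T}a_i$. Since $S$ is zero-sumfree, the only subsequence with $\sigma(T)=0$ is $T=1$, and this coefficient is $(-1)^{\ell}\prod_i a_i\neq 0$ for every $a_1,\dots,a_\ell\in\mathbf{F}^{\times}$. Hence the product never vanishes, forcing $d(G,\mathbf{F})\ge|S|=d(G)$; this argument works over any field.

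The content is the reverse inequality $d(G,\mathbf{F})\le d(G)$: every sequence $S=g_1\cdots g_\ell$ of length $\ell=\mathsf{D}(G)=n_1+n_2-1$ must admit $a_1,\dots,a_\ell\in\mathbf{F}^{\times}$ with $\prod_i(X^{g_i}-a_i)=0$. I will work in the characteristic relevant to the group-algebra method, $\operatorname{char}\mathbf{F}=p$ (the smallest prime, $2$ or $3$); the semisimple case is handled by the same covering argument applied to $\mathbf{F}G\cong\prod_{\chi\in\widehat{G}}\mathbf{F}$. Write $G=P\oplus K$ with $P$ the Sylow $p$-subgroup and $K$ the $p'$-complement, and $g=(g_P,g_K)$. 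Because $\mathbf{F}$ splits $K$ and $p\nmid|K|$, the idempotents of $\mathbf{F}K$ give
\[
\mathbf{F}G\;\cong\;\prod_{\psi\in\widehat{K}}\mathbf{F}P,\qquad X^{g}\longmapsto\big(\psi(g_K)\,X^{g_P}\big)_{\psi},
\]
so $\prod_i(X^{g_i}-a_i)=0$ if and only if $\prod_i\big(\psi(g_{i,K})X^{g_{i,P}}-a_i\big)=0$ in $\mathbf{F}P$ for every $\psi\in\widehat{K}$. Each $\mathbf{F}P$ is local with nilpotent augmentation ideal $I_P$: writing $P=\bigoplus_i C_{p^{e_i}}$ one has $\mathbf{F}P\cong\mathbf{F}[t_1,\dots,t_r]/(t_1^{p^{e_1}},\dots,t_r^{p^{e_r}})$, so the nilpotency index of $I_P$ equals $1+\sum_i(p^{e_i}-1)=\mathsf{D}(P)$. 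Since $\psi(g_{i,K})X^{g_{i,P}}$ has residue $\psi(g_{i,K})$ in $\mathbf{F}P/I_P\cong\mathbf{F}$, the $i$-th factor lies in $I_P$ exactly when $a_i=\psi(g_{i,K})$; in that case it equals $\psi(g_{i,K})(X^{g_{i,P}}-1)$, and a matched sub-product already vanishes once those projections $g_{i,P}$ lie in a common cyclic subgroup $C_{p^e}\le P$ and number at least $p^e<\mathsf{D}(P)$. The task thereby becomes combinatorial: assign one forbidden value $a_i$ per term so that, for every $\psi\in\widehat{K}$, the matched factors accumulate enough nilpotency to annihilate the $\psi$-component, using only $\ell=\mathsf{D}(G)=n_1+n_2-1$ terms at our disposal.

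This covering step is the main obstacle, and it is where the smallness of $p$ and the rank-two hypothesis are indispensable. Two regimes are easy. If at least $\mathsf{D}(P)$ terms already lie in $P$, set their $a_i=1$: each factor $X^{g_{i,P}}-1$ then lies in $I_P$ for all $\psi$ simultaneously, so every $\psi$-component lands in $I_P^{\mathsf{D}(P)}=0$. At the opposite extreme, if the relevant projections $g_{i,P}$ concentrate in a single cyclic $C_{p^e}\le P$, only $p^e<\mathsf{D}(P)$ matched factors per character are needed, and since for $q$ prime each outside term (of order $q$ in $K$) may be assigned to any prescribed $\psi$, the $\ell$ terms more than suffice. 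The genuine difficulty, already present for $G=C_3\oplus C_{3q}$, is the intermediate configurations in which neither enough terms sit inside $P$ nor the outside projections concentrate, so that for some characters the $\psi$-component is killed only through cancellation among all the factors rather than by nilpotency of a matched sub-product. Controlling these requires the explicit multiplication in $\mathbf{F}[C_p\oplus C_{p^{a+1}}]$ for $p\in\{2,3\}$ together with the inverse theorems for long zero-sum sequences over rank-two groups, in order to produce a single global assignment $(a_i)$ that works for all $|\widehat{K}|$ characters at once. Carrying out this analysis is precisely the technical heart supplied by \cite[Theorem 3.3]{GL} and \cite[Theorem 1.1]{S}; it completes $d(G,\mathbf{F})\le d(G)$ and hence the equality $d(G,\mathbf{F})=d(G)$.
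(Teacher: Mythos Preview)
The paper itself gives no proof of this lemma; it is quoted verbatim from \cite[Theorem~3.3]{GL} and \cite[Theorem~1.1]{S}. So the comparison is not with an argument in the present paper but with whether your write-up stands on its own.

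Your lower bound $d(G,\mathbf{F})\ge d(G)$ is correct and complete: for any zero-sumfree sequence the constant term of $\prod_i(X^{g_i}-a_i)$ is $(-1)^{\ell}\prod_i a_i\neq 0$, so the product never vanishes. This works over every field and for every $G$, and is the standard easy direction.

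The upper bound, however, is not proved. Your reduction---decomposing $G=P\oplus K$, using $\mathbf{F}G\cong\prod_{\psi\in\widehat K}\mathbf{F}P$ when $\mathbf{F}$ splits $K$, and translating $\prod_i(X^{g_i}-a_i)=0$ into a character-by-character nilpotency condition in the local ring $\mathbf{F}P$---is a legitimate framework, and your identification of the problem as a covering problem (choose a single tuple $(a_i)$ so that for every $\psi$ enough factors fall into the augmentation ideal) is accurate. But you then write that ``carrying out this analysis is precisely the technical heart supplied by \cite[Theorem~3.3]{GL} and \cite[Theorem~1.1]{S}.'' That is circular: those references \emph{are} the lemma you are trying to prove. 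The entire content of the result is the combinatorial covering step you explicitly leave out, and nothing short of actually producing the assignment $(a_i)$ for an arbitrary sequence of length $\mathsf{D}(G)$ (or an equivalent argument) constitutes a proof. What you have written is a correct outline of the strategy of \cite{GL,S}, together with the trivial direction, but not an independent proof of the lemma.

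A minor point: you shift midstream from ``$\mathbf{F}$ a splitting field of $G$'' to ``$\operatorname{char}\mathbf{F}=p$'' and wave at the semisimple case. These are not mutually exclusive---a field of characteristic $p$ containing an $\exp(K)$-th root of unity is still a splitting field for $G$---but the one-line dismissal of the semisimple case (``handled by the same covering argument'') hides the fact that the covering problem there is over $\widehat{G}$ rather than $\widehat{K}$ and requires its own argument.
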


For any $\alpha\in \mathbf{F}G$, by $L_{\alpha}$ we denote the set of elements $g\in G$ such that $\alpha
(X^g-a)=0$ holds for some $a\in \mathbf{F}^{\times}$. We note that the statement in the following lemma is slightly
more general than that in \cite[Lemma 5]{QH2}; however, the same proof carries over.

\begin{lemma}\cite[Lemma 5]{QH2}\label{coset}
Let $G$ be a finite abelian group and $S=g_1\cdot \ldots \cdot g_\ell$ be a sequence over $G$. Suppose that
$\alpha=(X^{g_1}-a_1)\cdot \ldots \cdot (X^{g_{\ell}}-a_{\ell})\neq0$ for some $a_1,\ldots,a_{\ell}\in
\mathbf{F}^{\times}$ and $H=L_{\alpha}$, then $\sum(S)\supseteq (g_0+H)\setminus\{0\}$ for some $g_0\in G$.
Moreover, $\sum(Sh)\supseteq g_0+H$ for any $h\in H$.
\end{lemma}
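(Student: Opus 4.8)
The plan is to work entirely inside the group algebra $\mathbf{F}G$ and to translate the statement about subsequence sums into a statement about the support of $\alpha$. First I would record the structure of $H=L_\alpha$. For $h\in H$ there is $a_h\in\mathbf{F}^{\times}$ with $\alpha(X^h-a_h)=0$, i.e. $\alpha X^h=a_h\alpha$; since $\alpha\neq 0$ the scalar $a_h$ is uniquely determined. From $\alpha X^{h}=a_h\alpha$ and $\alpha X^{h'}=a_{h'}\alpha$ one gets $\alpha X^{h+h'}=a_ha_{h'}\alpha$ and $\alpha X^{-h}=a_h^{-1}\alpha$, so $H$ is a subgroup of $G$ and $h\mapsto a_h$ is a homomorphism $H\to\mathbf{F}^{\times}$ (with $0\in H$ and $a_0=1$).

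Next I would expand $\alpha$ in the standard basis. Writing $\alpha=\sum_{g\in G}c_gX^g$ and multiplying out $\prod_{i=1}^{\ell}(X^{g_i}-a_i)$, every monomial has the form $\bigl(\prod_{i\notin I}(-a_i)\bigr)X^{\sigma(S_I)}$ for some $I\subseteq[1,\ell]$, where $S_I=\prod_{i\in I}g_i$. Hence only elements of the form $\sigma(S_I)$ can occur with nonzero coefficient, which gives $\supp(\alpha)\subseteq\{\sigma(S_I):I\subseteq[1,\ell]\}=\sum_0(S)$. On the other hand, comparing coefficients in $\alpha X^h=a_h\alpha$ yields $c_{g+h}=a_h^{-1}c_g$ for all $g\in G$ and $h\in H$; since $a_h^{-1}\neq 0$ this shows $c_g\neq 0\iff c_{g+h}\neq 0$, so $\supp(\alpha)$ is a union of cosets of $H$. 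Because $\alpha\neq 0$ its support is nonempty and therefore contains at least one full coset $g_0+H$. Combining the two facts, $g_0+H\subseteq\supp(\alpha)\subseteq\sum_0(S)=\sum(S)\cup\{0\}$, and deleting the possible element $0$ gives $(g_0+H)\setminus\{0\}\subseteq\sum(S)$, the first assertion.

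For the ``moreover'' statement I keep the same $g_0$ and split into cases according to whether $0\in g_0+H$. If $0\notin g_0+H$, then $(g_0+H)\setminus\{0\}=g_0+H$, and since $\sum(S)\subseteq\sum(Sh)$ the claim is immediate. If $0\in g_0+H$, then $g_0+H=H$, and it remains only to produce a nonempty subsequence of $Sh$ summing to $0$: if $h=0$ the single term $h$ works, while if $h\neq 0$ then $-h\in H\setminus\{0\}\subseteq\sum(S)$, so some nonempty $T\mid S$ has $\sigma(T)=-h$ and then $Th$ sums to $0$. Thus $0\in\sum(Sh)$, and together with $H\setminus\{0\}\subseteq\sum(S)\subseteq\sum(Sh)$ we obtain $g_0+H=H\subseteq\sum(Sh)$. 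I do not expect a serious obstacle, since most of this is bookkeeping; the one point requiring care is the passage through the support, namely verifying that the eigenvalue relation $\alpha X^h=a_h\alpha$ makes $\supp(\alpha)$ genuinely $H$-periodic so that a nonempty support must contain an entire coset, and simultaneously not losing that coset to cancellation. This is exactly why I state the inclusion $\supp(\alpha)\subseteq\sum_0(S)$ at the level of the support rather than attempting an equality.
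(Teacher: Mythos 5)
Your proof is correct and takes essentially the same route as the argument the paper invokes from \cite{QH2}: expand $\alpha$ to get $\supp(\alpha)\subseteq\sum_0(S)$, use the uniqueness of the eigenvalue in $\alpha X^h=a_h\alpha$ to show $H$ is a subgroup and $\supp(\alpha)$ is a union of $H$-cosets, and extract a full coset $g_0+H$ from the nonempty support. Your separate treatment of the element $0$ in the ``moreover'' part (producing $\sigma(Th)=0$ from some $T\mid S$ with $\sigma(T)=-h$) is exactly the bookkeeping that proof needs, so nothing is missing.
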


\section{Proof of Theorem \ref{mainthm}}
Let $G=C_3\oplus C_{3q}=H\oplus K$, where $H\cong C_3\oplus C_3$, $K\cong C_q$ and $q \geq 5$ is a prime. Let
$S={0\choose1}^{3q-2}{1\choose-1}^{4}$ be a sequence over $G$ with length $3q+2$. Then $S$ is regular and
${2\choose 3q-3}\notin\sum(S)$. It follows from this example that $\mathsf c_0(C_3\oplus C_{3q})\geq 3q+3$. To show
the equality holds it is sufficient to prove every regular sequence over $G$ with length $3q+3$ forms an additive
basis of $G$. Let $S=g_1\cdot \ldots \cdot g_\ell$ be a regular sequence over $G$ with length $\ell=3q+3$,
$S_H=g_1\cdot \ldots \cdot g_t$ and  $S_K=g_{t+1}\cdot \ldots \cdot g_{t+r}$. We first prove the following crucial
lemma, which provides some sufficient conditions for a regular sequence $S$ of length $3q+3$ to be an additive basis.

\begin{lemma}\label{basic}
Let $G=C_3\oplus C_{3q}$, and $S$ be a regular sequence over $G$ with length $\ell=3q+3$, where $q \geq 5$ is a
prime. Then $\sum(S)=G$ if any of the following conditions holds:
\begin{itemize}
\item[(i)] There exists a nontrivial subgroup $H'$ of $G$ such that $\sum_0(S_{H'})=H'$;

\item[(ii)] There exist a subsequence $S'|S$ and a nontrivial subgroup $H'$ of $G$ such that $\sum_0(S')\supseteq
    a+H'$ for some $a\in G$ and $(\ell-|I_{S_M}\cup I_{S'}|+1)|M|\geq 9q $ where
    $M=st((a+H')+\sum_0(SS'^{-1}))$;

\item[(iii)] There exist some $b_i\in \mathbf{F}^{\times}$ for all $i\in I_{S_H}\cup I_{S_K}$ such that
    $\Pi_{i\in I_{S_{H}}\cup I_{S_K}}(X^{g_i}-b_i)=0$.
\end{itemize}
\end{lemma}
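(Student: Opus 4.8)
The plan is to treat the three conditions separately, using throughout the observation that since $\ell=3q+3\ge \mathsf D(G)=3q+2$, the sequence $S$ contains a nonempty zero-sum subsequence, so $0\in\sum(S)$; hence in each case it suffices to prove $\sum\nolimits_0(S)=G$. For (i) I would factor $S=S_{H'}\cdot(SS_{H'}^{-1})$ and use the elementary inclusion $\sum\nolimits_0(S)\supseteq\sum\nolimits_0(S_{H'})+\sum\nolimits_0(SS_{H'}^{-1})=H'+\sum\nolimits_0(SS_{H'}^{-1})$, which reduces the claim to showing that the image of $SS_{H'}^{-1}$ in $G/H'$ has full $\sum\nolimits_0$. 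Since $SS_{H'}^{-1}$ contains no term of $H'$, all of its images are nonzero, and $|SS_{H'}^{-1}|\ge\ell-(|H'|-1)$. According to the order of $H'$ the quotient is $C_3$, $C_q$, $C_{3q}$ or $C_3\oplus C_3$, and Kneser's theorem (Lemma \ref{Kneser}), Lemma \ref{cyclic} and Lemma \ref{rank2} dispatch these once one rules out concentration of the image in a proper subgroup. That last point is exactly where regularity enters: if the image concentrated in a proper subgroup of $G/H'$, then $S$ itself would lie in a proper subgroup of $G$ of order $\le 3q-1$, contradicting $\ell=3q+3$; this bookkeeping always leaves at least $\ell-(3q-1)=4$ terms outside any intermediate subgroup, which is precisely what Kneser needs.

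For (ii) the argument is a single Kneser count. Set $B=(a+H')+\sum\nolimits_0(SS'^{-1})\subseteq\sum\nolimits_0(S)$ and $M=\mathrm{st}(B)$. Passing to $G/M$, where $B$ is aperiodic, I would apply Lemma \ref{Kneser} to $B=(a+H')+\sum_{i\in I_{SS'^{-1}}}\{0,g_i\}$. Each term $g_i\notin M$ contributes a factor of size $2|M|$ and each $g_i\in M$ a factor of size $|M|$, so the bound reads $|B|\ge\bigl(1+\#\{i\in I_{SS'^{-1}}:g_i\notin M\}\bigr)|M|$, and a short count gives $\#\{i\in I_{SS'^{-1}}:g_i\notin M\}=\ell-|I_{S_M}\cup I_{S'}|$. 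Thus $|B|\ge(\ell-|I_{S_M}\cup I_{S'}|+1)|M|\ge 9q=|G|$ by hypothesis, forcing $B=G$. So (ii) is engineered precisely so that this count closes.

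For (iii) the structural fact is that since $\gcd(|H|,|K|)=\gcd(9,q)=1$ we have $\mathbf FG\cong\mathbf FH\otimes_{\mathbf F}\mathbf FK$, under which $\prod_{i\in I_{S_H}\cup I_{S_K}}(X^{g_i}-b_i)=P_H\otimes P_K$ with $P_H=\prod_{i\in I_{S_H}}(X^{g_i}-b_i)$ and $P_K=\prod_{i\in I_{S_K}}(X^{g_i}-b_i)$; this vanishes if and only if $P_H=0$ or $P_K=0$. I would first rule out $P_K=0$: in the character decomposition of $\mathbf FK\cong\mathbf F^{\,q}$, a factor $X^{g_i}-b_i$ with $g_i\in K\setminus\{0\}$ annihilates exactly one of the $q$ characters of $K$ (regularity forbids $g_i=0$), so the $|S_K|\le|K|-1=q-1$ factors cannot annihilate all $q$ characters, whence $P_K\ne 0$ and therefore $P_H=0$. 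Dually, $P_H=0$ means the factors cover $\widehat H$ by hyperplane cosets; choosing $S_1\mid S_H$ minimal with vanishing product and deleting one factor $g_j$, the remainder $Q$ is nonzero with $\mathrm{supp}(Q)\subseteq\{\chi:\chi(g_j)=b_j\}$, a single hyperplane coset, so $L_Q\supseteq\langle g_j\rangle$ is a nontrivial subgroup $H'$ of $H$ containing $g_j$. The second assertion of Lemma \ref{coset} then yields $\sum\nolimits_0(S_1)\supseteq c+H'$ for some $c\in H$.

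At this stage (iii) reduces to the Kneser count of (ii): with $B=(c+H')+\sum\nolimits_0(SS_1^{-1})$ and $M=\mathrm{st}(B)$, one wants $(\ell-|I_{S_M}\cup I_{S_1}|+1)|M|\ge 9q$, and since $|S_1|\le|S_H|\le 8$ the complement $SS_1^{-1}$ is long, of length $\ge 3q-5$, which should drive the inequality. I expect this final step to be the main obstacle: unlike in (i) and (ii), the covered coset $c+H'$ lies inside $H$ while most of the mass of $S$ lies off $H$, so one must verify that deleting the short block $S_1$ still leaves enough terms outside every candidate stabilizer $M$ to reach $9q$. Controlling $|I_{S_M}|$ in the worst case, when $M$ is a large subgroup meeting $S$ in many terms, and tracking how $M$ interacts with $H'=L_Q$ and with $S_K$, is the delicate part, and is presumably where the hypotheses $q\ge 5$ and the exact length $\ell=3q+3$ are used.
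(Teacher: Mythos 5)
Your treatments of (i) and (ii) are sound. For (ii) your Kneser count is exactly the paper's proof, including the identity $\#\{i\in I_{SS'^{-1}}:g_i\notin M\}=\ell-|I_{S_M}\cup I_{S'}|$. For (i) you take a genuinely different and more elementary route: the paper simply observes $st(\sum(S))\supseteq H'\neq\{0\}$ and invokes Lemma~\ref{st=0} (applicable since $\ell=3q+3\geq\max\{\frac{|G|}{3}+1,\mathsf D(G)\}$), whereas you argue directly in $G/H'$ with Kneser. Your version can be completed, but your uniform claim that ``at least $\ell-(3q-1)=4$ terms outside any intermediate subgroup is precisely what Kneser needs'' undercounts in one subcase: for $G/H'\cong C_{3q}$ with stabilizer of order $3$, four outside terms give only $3\cdot 5=15<3q$; there the relevant preimage is $H$ of order $9$, so regularity supplies $\ell-8=3q-5$ outside terms and the count closes. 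This is repairable, but it is exactly the kind of bookkeeping your sketch defers.

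The genuine gap is in (iii), and you have flagged it yourself: you stop precisely where the paper does its real work. The reduction to the count of (ii) with $S_1\mid S_H$ and $|S_1|\leq 8$ does \emph{not} close on its own: in the worst case $M=st(B)$ has order $3$ and $|I_{S_M}\cup I_{S_1}|=8$, giving $(3q+3-8+1)\cdot 3=9q-12<9q$. The missing ideas are: (a) first dispose of $|S_H|\geq 5$ by Peng's result (Lemma~\ref{rank2}, since $\mathsf c_0(C_3\oplus C_3)=5$), which gives $\sum(S_H)=H$ and finishes via (i); this forces $|S_H|\leq 4$ in the remaining case, making the coset block short. (b) Then a case analysis on $M\supseteq N$: if $M=N$ then $S_M\mid S_H$ and $(\ell-3)|M|\geq 9q$; if $M\supseteq H$ then $S_H\mid S_M$ and regularity $|S_M|\leq|M|-1$ gives $(3q+5-|M|)|M|\geq 9q$; if $M\cong C_{3q}$ one needs two further devices your sketch lacks: when $|S_M|=3q-1$, the inverse theorem Lemma~\ref{cyclic} yields $\sum\nolimits_0(S_M)=M$ (even in the exceptional case $S_M=g^{3q-1}$, since then $\{0,g,\dots,(3q-1)g\}=M$ already), so (i) applies; when some order-$3$ subgroup $N_0$ carries two terms, $\sum\nolimits_0(S_{N_0})=N_0$ and (i) applies; and otherwise each order-$3$ subgroup carries at most one term, whence $|I_{S_H}\setminus I_{S_M}|\leq 3$ and the count closes as $(\ell-|I_{S_M}|-2)\cdot 3q\geq 9q$. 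Without (a) and the $M\cong C_{3q}$ devices, the Kneser inequality you propose simply fails numerically, so (iii) is not proved by your argument. One positive note: your character count showing $P_K\neq 0$ (at most $q-1$ factors cannot annihilate all $q$ characters of $K$) is correct and slightly sharper than the paper, which instead permits $P_K=0$ and disposes of that branch via $\sum\nolimits_0(S_K)=K$ and (i).
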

\begin{proof}
(i) Since $\sum_0(S_{H'})=H'$ for some nontrivial subgroup $H'$ of $G$ by assumption, we have $\sum(S)+H'= \sum(S)$,  so $st(\sum(S))\supseteq H'\neq \{0\}$. Since $S$ is a regular sequence and $|S|\geq \mbox{max}\{\frac{|G|}{3}+3-2=3q+1, \mathsf D(G)=3q+2\}$, by Lemma~\ref{st=0} we have $\sum(S)=G$.

(ii) Let $A=(a+H')+\sum_0(SS'^{-1})$. By assumption, we have $\sum(S)\supseteq A$ and $M\supseteq H'$. By
Lemma~\ref{Kneser}, we have
\begin{align*}
|\sum(S)|&\geq |a+H'+M|+\sum_{i\in I_S\setminus (I_{S_M}\cup I_{S'})}|\{0, g_i\}+M|-(\ell-|I_{S_M}\cup
I_{S'}|)|M|\\
&\geq (\ell-|I_{S_M}\cup I_{S'}|+1)|M|\\
&\geq |G|.
\end{align*}
Thus $\sum(S)=G$.

(iii) Recall that $S_H=g_1\cdot \ldots \cdot g_t$ and $S_K=g_{t+1}\cdot \ldots \cdot g_{t+r}$. Since $\Pi_{i\in
[1,t+r]}(X^{g_i}-b_i)=0$ for some $b_i\in \mathbf{F}^{\times}$ and $G=H\oplus K$, we conclude that either
$\Pi_{i\in [1,t]}(X^{g_i}-b_i)=0$ or $\Pi_{i\in [t+1,t+r]}(X^{g_i}-b_i)=0$. Otherwise, if both  $\Pi_{i\in
[1,t]}(X^{g_i}-b_i)\neq 0$ and $\Pi_{i\in [t+1,t+r]}(X^{g_i}-b_i)\neq 0$ hold, then let
$$\Pi_{i\in [1,t]}(X^{g_i}-b_i)=\sum_{h\in H}c_hX^h, \ \ \ \ \Pi_{i\in [t+1,t+r]}(X^{g_i}-b_i)=\sum_{k\in K}c_kX^k,$$
and $$\Pi_{i\in I_{S_{H}}\cup I_{S_K}}(X^{g_i}-b_i)=\sum_{g\in G}e_gX^g.$$
We have $\Pi_{i\in I_{S_{H}}\cup I_{S_K}}(X^{g_i}-b_i)=(\sum_{h\in H}c_hX^h)(\sum_{k\in K}c_kX^k)=\sum_{h\in H, k\in
K}c_hc_kX^{h+k}$, and thus $e_g=\sum_{g=h+k, h\in H, k\in K}c_hc_k$. Since $G=H\oplus K$, for each $g\in G$,
$e_g=c_hc_k$ for unique $h\in H$, $k\in K$ with $g=h+k$. Since $\Pi_{i\in [1,t]}(X^{g_i}-b_i)\neq 0$ and $\Pi_{i\in
[t+1,t+r]}(X^{g_i}-b_i)\neq 0$, we have $c_{h_0}\neq 0$ and $c_{k_0}\neq 0$ for some $h_0\in H$ and $k_0\in K$.
Therefore, $e_{h_0+k_0}=c_{h_0}c_{k_0}\neq 0$, yielding a contradiction to the assumption.

If $\Pi_{i\in [t+1,t+r]}(X^{g_i}-b_i)=0$, then by Lemma~\ref{coset}, $\sum_0(S_K)=K$. By~(i), $\sum(S)=G$.

Next assume that $\Pi_{i\in [1,t]}(X^{g_i}-b_i)=0$. Since $S$ is regular, $|S_{H}|\leq |H|-1=8$. If $|S_{H}|\geq
5$, then by Lemma~\ref{rank2} $\sum(S_{H})=H$. By (i), $\sum(S)=G$.

We now consider the case when $|S_{H}|\leq 4$. Since $\Pi_{i\in [1,t]}(X^{g_i}-b_i)=0$, by Lemma~\ref{coset},
$\sum_0(S_{H})\supseteq a+N$ for some nontrivial subgroup $N\subseteq H$. Let $A=(a+N)+\sum_0(SS_{H}^{-1})$ and
$st(A)=M$. Then $\sum(S)\supseteq A$ and $M\supseteq N$. If $M=N$, then $M\subseteq H$. Thus $S_M|S_{H}$.
Therefore, $(\ell-|I_{S_M}\cup I_{S_H}|+1)|M|\geq(\ell-3)|M|\geq 9q$. By~(ii), $\sum(S)=G$.

Suppose that $M\supsetneq N$. If $M\supseteq H$, then $S_{H}|S_M$. Since $|S_M|\leq |M|-1$, we have
$(\ell-|I_{S_M}\cup I_{S_H}|+1)|M|=(\ell-|I_{S_M}|+1)|M| \geq(3q-|M|+5)|M|\geq 9q$. By~(ii), $\sum(S)=G$.

If $M\nsupseteq H$, then $M\cong C_{3q}$. If $|S_M|\geq |M|-1$, then by Lemma~\ref{cyclic} $\sum_0(S_M)=M$. Thus
by~(i), $\sum(S)=G$ and we are done. So we may assume that $|S_M|\leq |M|-2$. If there exists a subgroup $N_0$ of
order $3$ such that $|S_{N_0}|\geq 2$, then $|S_{N_0}|=2$ and $\sum_0(S_{N_0})=N_0$. By~(i), $\sum(S)=G$. Next we
may always assume $|S_{N_0}|\leq 1$ for every subgroup $N_0$ of order $3$. Then $|I_{S_H}\setminus I_{S_{H'}}|\leq
3$ where $H'$ is a cyclic subgroup with order $3$ or $3q$. In particular, $|I_{S_{H}}\setminus I_{S_M}|\leq 3$.
Thus $(\ell-|I_{S_M}\cup I_{S_{H}}|+1)|M| \geq(\ell-|I_{S_M}|-2)|M| \geq(3q-|M|+3)|M|\geq 9q$. Hence by~(ii), again
we obtain $\sum(S)=G$. This completes the proof. \end{proof}

We are now in position to give a proof for the main result.\\

\noindent {\bf Proof of Theorem \ref{mainthm}.}\\

Let $S$ be a regular sequence over $G$ with length $3q+3$. We need to show that $\sum(S)=G$. Assume to the contrary
that $\sum(S)\neq G$. By Lemma~\ref{basic}~(iii), we can find a subsequence $S_1|S$ with maximal length such that
$S_HS_K|S_1$ and $\Pi_{i\in I_{S_1}}(X^{g_i}-a_i)\neq 0$ for all $a_i\in \mathbf{F}^{\times}$ where $i\in I_{S_1}$ and $\mathbf{F}$ is a splitting field of $G$.
Notice that every element of $SS_1^{-1}$ has order $3q$. Without loss of generality, we may assume that
$S_1=\Pi_{i=1}^{m}g_i$ where $m\in[t+r, \ell]$. We distinguish the proof into the following two cases:\\

\noindent {\bf Case 1.} $m\geq 3q+1$.\\

By Lemma~\ref{d(G,F)}, $\mathsf d(G,\mathbf{F})=3q+1$, so $m=3q+1$. Then for any $g\in G$, there exists a subsequence $S_g|S_1$ with minimal length such that $(X^g-b_g)\Pi_{i\in I_{S_g}}(X^{g_i}-b_i)=0$ for some $b_g, b_i\in \mathbf{F}^{\times}$ where $i\in I_{S_g}$. By Lemma \ref{coset}, $\sum_0(S_1)$ contains a coset of $\langle
g\rangle$. Since there are exactly $4$ distinct subgroups of $G$ of order $3q$, and both $g_{3q+2}$ and $ g_{3q+3}$
have order of $3q$,  we can find a subgroup  $\langle g_0\rangle$ of order $3q$ such that $g_{3q+2}, g_{3q+3}\notin
\langle g_0\rangle$. Thus, $(\ell-|I_{S_{\langle g_0\rangle}}\cup I_{S_1}|+1)|M|\geq 3|M|\geq 9q$. By
Lemma~\ref{basic}~(ii), $\sum(S)=G$, yielding a contradiction.\\

\noindent {\bf Case 2.} $m\leq 3q$.\\

For any $g|SS_1^{-1}$, there exists a subsequence $S_g|S_1$ with minimal length such that $S_HS_K|S_g$ and
$(X^{g}-b_{g})\Pi_{i\in I_{S_g}}(X^{g_i}-b_i)=0$ for some $b_g, b_{i}\in \mathbf{F}^{\times}$ where $i\in I_{S_g}$.
Since both $\Pi_{i\in I_{S_g}}(X^{g_i}-b_i)\neq 0$ and $(X^g-b_g)\Pi_{i\in I_{S_g}\setminus\{j\}}(X^{g_i}-b_i)\neq
0$ where $j\notin I_{S_H}\cup I_{S_K}$, it follows from Lemma~\ref{coset} that
\begin{center}
$\sum_0(S_g)$ contains a complete coset of $\langle g\rangle$  \ \ \ \ \    (*)
\end{center}
and
\begin{center}
$\sum_0(S_gg)$ contains a complete coset of $\langle g_j\rangle$ for any $j\in I_{S_g}\setminus (I_{S_H}\cup
I_{S_K})$. \ \ \ (**)
\end{center}
If $\langle g_{3q+1}, g_{3q+2}, g_{3q+3}\rangle$ is not a cyclic group, then without loss of generality, we may
assume that both $g_{3q+2}, g_{3q+3}$ are not in $\langle g_{3q+1}\rangle$. This together with (*), proves that
$(\ell-|I_{S_{\langle g_{3q+1}\rangle}}\cup I_{S_1}|+1)|M|\geq 3|M|\geq 9q$. By Lemma~\ref{basic} (ii),
$\sum(S)=G$,  yielding a contradiction.

Next we assume that $\langle g_{3q+1}, g_{3q+2}, g_{3q+3}\rangle=H_1$ is a cyclic group.  We distinguish the rest
of the proof into the following two subcases:\\

\noindent {\bf Subcase 2.1.} Every element of $S_{g_{3q+1}}g_{3q+1}(S_HS_K)^{-1}$ is contained in the subgroup $H_1$.\\

Let $T=S_{g_{3q+1}}$. Then $I_T\setminus I_{S_{H_1}} \subseteq I_{S_H}$. By (*), $\sum_0(T)$ contains a complete coset
of $\langle g_{3q+1}\rangle$. If $|S_{H_1}|\geq 3q-1$, by Lemma \ref{cyclic}, $\sum_0(S_{H_1})=H_1$. By Lemma
\ref{basic} (i), $\sum(S)=G$, yielding a contradiction.

If $|S_{H_1}|\leq 3q-2$, since $\sum(S)\neq G$, as in the proof of lemma~\ref{basic}~(iii), we obtain
$|I_{S_H}\setminus I_{S_{H_1}}|\leq 3$. Since $I_T\setminus I_{S_{H_1}} \subseteq I_{S_H}$, we have
$(\ell-|I_{S_{H_1}}\cup I_T|+1)|M|\geq (\ell-|I_{S_{H_1}}|-2)|M|\geq 3|M|\geq 9q$. By Lemma \ref{basic} (ii),
$\sum(S)=G$, yielding a contradiction.\\

\noindent {\bf Subcase 2.2.} There exists an element $g_j$ of $S_{g_{3q+1}}g_{3q+1}(S_HS_K)^{-1}$ such that $g_j\notin H_1$ for some $j\in I_{S_{g_{3q+1}}}\setminus (I_{S_H}\cup I_{S_K})$. \\

By (**), $\sum_0(S_{g_{3q+1}}g_{3q+1})$ contains a
complete coset of $\langle g_j\rangle$. Since $\langle g_{3q+2}\rangle=\langle g_{3q+3}\rangle=H_1$, we have
$g_{3q+2}, g_{3q+3}\notin \langle g_j\rangle$. Then $(\ell-|I_{S_{\langle g_j\rangle}}\cup I_{S_1}|+1)|M|\geq
3|M|\geq 9q$. By Lemma \ref{basic} (ii), $\sum(S)=G$, yielding a contradiction.

In all cases we have found contradictions. Thus $\sum(S)=G$ holds as desired.\\

\subsection*{Acknowledgements}
This work was carried out during a visit by the first author to Brock University as an international visiting scholar. He would like to sincerely thank the host institution for its hospitality and for providing an excellent atmosphere for research. This work was supported in part by the National Science Foundation of China (Grant No. 11701256, 11871258), the Youth Backbone Teacher Foundation of Henan's University (Grant No. 2019GGJS196), the China Scholarship Council (Grant No. 201908410132), and it was also supported in part by a Discovery Grant from the Natural Sciences and Engineering Research Council of Canada (Grant No. RGPIN 2017-03903).


\normalsize


\begin{thebibliography}{[HD82]}




\normalsize
\baselineskip=17pt


\bibitem{GHQQZ} W. Gao, D. Han, G. Qian, Y. Qu and H. Zhang, \emph{On additive bases II}, Acta Arith. 168 (2015)
    247--267.

\bibitem{GL} W. Gao and Y. Li, \emph{Remarks on group rings and the Davenport constant}, Ars Combin. 101 (2011)
    417--423.

\bibitem{GPZ2013} W. Gao, J. Peng and Q. Zhong, \emph{A quativative aspect of non-unique factorizations: the
    Narkiewicz constants III}, Acta Arith. 158 (2013), 271--285.

\bibitem{GQZ} W. Gao, Y. Qu and H. Zhang, \emph{On additive bases III}, Acta Arith. 193 (2020) 293--308.

\bibitem{GeK} A. Geroldinger and F. Halter-Koch, \emph{Non-Unique Factorizations. Algebraic, Combinatorial and Analytic Theory}, Pure and Applied Mathematics, vol. 278, Chapman \& Hall/CRC, Boca Raton, 2006.

\bibitem{Na} M. Nathanson, \emph{Additive Number Theory : Inverse Problems and the Geometry of Sumsets}, Graduate Texts in Mathematics, vol. 165, Springer-Verlag, New York, 1996.

\bibitem{Peng1} C. Peng, \emph{Addition theorems in elementary abelian groups I}, J.~Number Theory 27 (1987) 46--57.

\bibitem{Peng2} C. Peng, \emph{Addition theorems in elementary abelian groups II}, J.~Number Theory 27 (1987) 58--62.

\bibitem{QH1} Y. Qu and D. Han, \emph{An inverse theorem for additive bases}, Int. J.~Number Theory 12 (2016)
    1509--1518.

\bibitem{QH2} Y. Qu and D. Han, \emph{Additive bases of $C_p\oplus C_{p^n}$}, Int. J.~Number Theory 13 (2017)
    2453--2459.

\bibitem{S} D. Smertnig, \emph{On the Davenport constant and group algebras}, Colloq. Math. 121 (2010) 179--193.

\end{thebibliography}
\end{document}